\newtheorem{theorem}{Theorem}[section]
\newtheorem{lemma}[theorem]{Lemma}
\newtheorem{proposition}[theorem]{Proposition}
\newtheorem{definition}{Definition}[section]
\newtheorem{remark}{Remark}[section]
\newtheorem{example}{Example}[section]
\DeclareMathOperator{\ENDO}{Endo} \DeclareMathOperator{\Ob}{Ob}
\DeclareMathOperator{\Mor}{Mor} \DeclareMathOperator{\Sz}{Sz}
\DeclareMathOperator{\id}{id} \DeclareMathOperator{\Inv}{Inv}
\DeclareMathOperator{\interior}{int}  
\DeclareMathOperator{\cl}{cl}
\let\texonly\relax\fi
\let\endtexonly\relax\fi
  \let\htmlonly\iffalse
\begin{document}
\title{The Conley index for piecewise continuous maps}
\author{Kaihua Wang $^1$ and Xinchu Fu $^{2,}$\footnote{Corresponding author. E-mail address: xcfu@shu.edu.cn;
Tel: +86-21-66132664; Fax: +86-21-66133292}\\
 \small $^{1}$ School of Mathematics and Statistics, Hainan Normal University, Haikou 571158, P.R. China\\
 \small $^{2}$ Department of Mathematics, Shanghai University, Shanghai 200444, P.R. China}

\date{}

\maketitle

\begin{abstract}
\noindent This paper gives the definition of the Conley index for a
piecewise continuous map, which can be well defined on compatible
isolating neighborhoods with W\`{a}zewski property slightly weaker
than continuous situation.

\vspace{0.2cm}\noindent {\bf Key words:} Conley index; piecewise
continuous map; discontinuity; coding

\vspace{0.4cm}\noindent \textbf{MSC2010}: \emph{37B30; 18A99}

\end{abstract}

\section{Introduction}

The Conley index was first developed on flows \cite{CC_Morse}, then
it was extended to continuous maps \cite{MR_Cohomological, A.S_DSS,
FR_Shift}. The index has been extensively studied in recent years
\cite{[8],[5],[6],[17]}.

The Conley index is an abstract extension of the Morse index,
carrying both existence and stability information concerning a flow
or a discrete dynamical system. By illustrating these properties,
sufficient conditions for the presence of a bifurcation point for
$1$-dimensional flows were presented in \cite{FU_BIF,FU_BIF2}, and
more generally, by using the general Conley index theory (see
\cite{[l]}), sufficient conditions for the existence of a
bifurcation (sub-bifurcation) point of a family of flows on a
compact metric space were also given in \cite{FU_BIF,FU_BIF2}. There
have been many successful applications of the Conley index, e.g., to
the studies of travelling and shock waves \cite{[l]}, periodic
solutions for Hamiltonian systems \cite{[3]}, infinite-dimensional
semiflows \cite{[2]}, combustion problems \cite{[4]}, factoring
problems \cite{[18]}, theory and computation for set-valued maps
\cite{[7]}, rigorous numerics for dynamical systems
\cite{[9],[15],[19]}, characterization of chaos or spatial-temporal
chaos \cite{[10],[20]}, computer assisted rigorous proof for chaos
\cite{[l1],[16],[l2],[l3],[l4]}, and so on. Therefore in the study
of dynamical systems, including both flows and maps, the Conley
index has been a very useful tool.

Dynamics of discontinuous maps is an emerging research field of
dynamical systems
\cite{Chu&Lin88,hyperbolicpwl,Koc&al96,PA96,A.G_DPR,Buz,AdlKitTre99,FuAsh03}.
Discontinuities which may be caused by collision (impacting),
switching, overflow (round-off), quantization etc., are unavoidable
in both theory and applications. The main obstacle in studying
discontinuous systems is the lack of necessary mathematical tools.
In fact, due to discontinuities, most of analytical and topological
methods cannot be directly used. Furthermore, huge number of
discontinuous systems are even not measure-preserving, so ergodic
theory may invalidate, too. Thus it is very important to develop new
tools for discontinuous systems.

In this paper, we shall generalize the Conley index to discontinuous
case. Usually, real world systems do not contain much discontinuous
points, so  here we only study piecewise continuous maps defined on
finite partitions. We follow Goetz's construction \cite{A.G_DPR,
A.G_DPI} to lift a discontinuous system (a piecewise continuous map)
to a continuous one by graph of the coding map, then use the Conley
index for continuous case (for example \cite{MR_Cohomological,
A.S_DSS, FR_Shift} etc.) to define the index of the discontinuous
map. Due to discontinuity, one can imagine the index such defined
may be fairly weak. But it does generalize continuous index and
reserve some important information of the discontinuous system. The
properties of the index here is more like the situation in
\cite{D.M_CCom}, that is, the index is only defined on compatible
isolating neighbrohoods, and the W\`{a}zewski property holds, while
it is weaker than the usual one, such as the one given in
\cite{A.S_DSS}. Furthermore, due to the absence of continuity, which
leads to the lack of suitable homotopy, the continuation property is
lost.

\section{Preliminaries}

We use $\mathbb{R}^+, \mathbb{Z}$, $\mathbb{Z}^+$, $\mathbb{Z}^-$,
$\mathbb{N}$ to denote the sets of nonnegative real numbers,
integers, nonnegative integers, nonpositive integers and natural
numbers, respectively. For a topological space $X$ and a subset
$A\subset X$, we shall denote by $\cl_X(A)$, $\partial_X{A}$ and
$\interior_X(A)$ the closure, boundary and the interior of $A$ in
$X$, and will omit the subscript if no confusion may be caused.

For a topological space $X$, a finite partition
$\mathscr{A}=\{A_0,A_1,\ldots,A_{n-1}\}$ ($n\in \mathbb{Z}^+$) means
that (1) $A_i \subset X$  for any $i=0,1,\ldots,n-1$; (2) $A_i\cap
A_j=\emptyset$ whenever $i\neq j$; and (3)
$\bigcup^{n-1}_{i=0}A_i=X$. Each $A_i$ will be called a piece of the
partition.

Suppose $X$, $Y$ are two topological spaces. A map $f:X \rightarrow
Y$ is called a finite piecewise continuous map from $X$ to $Y$ if
there exists a finite partition $\mathscr{A}=\{A_0,\ldots,A_{n-1}\}$
of $X$ for some $n\in \mathbb{Z}^+$, such that $f_i:=f|\cl_X({A_i}):
\cl_X({A_i})\rightarrow Y$ is continuous for each $i=0,\ldots,n-1$.

Given a piecewise continuous map $f$, we may coarsen the partition
by unioning some of the pieces such that the new one is still
piecewise continuous with the same map. Thus we can coarsen the
partition finite times to get a new partition with minimum number of
pieces, i.e., it cannot coarsen anymore, and call it the minimal
partition with respect to $f$.

In the sequel, $X$ will be assumed to be a locally compact metric
space with metric $d_X$. We use PCM to denote piecewise continuous
maps (with minimal finite partition) from locally compact metric
space to itself, and call it piecewise continuous discrete dynamical
system or piecewise continuous system for short.

For a PCM $f:X\rightarrow X$ with partition
$\mathscr{A}=\{A_0,A_1,\ldots,A_{n-1}\}$, the discontinuity set
$D:=\bigcup_{i\neq j}\cl(A_i)\cap \cl(A_j)$ should be carefully
considered. We call a partition $\mathscr{B}=\{B_0,B
_1,\ldots,B_{m-1}\}$ of $X$ an adjoint partition of $\mathscr{A}$ if
(1) $m=n$; (2) There exits a permutation $\phi$ of
$\{0,1,\ldots,n-1\}$ such that $A_i-D=B_{\phi(i)}-D$ for every
$i=0,1,\ldots,n-1$; This is of course an equivalence relation, so we
can say $\mathscr{A}$, $\mathscr{B}$ are mutually adjoint. Now
define $g:X\rightarrow X$ by $g|B_{\phi(i)}:=f_i|B_{\phi(i)}$, then
we get a new PCM with partition $\mathscr{B}$, and call it an
adjoint map with $f$. Two adjoint maps may only differ on the
discontinuity set.

\section{Coding and continuous lifting}

To study a PCM, coding is a very helpful method.
\begin{definition}
Let $f:X\rightarrow X$ be a PCM with partition
$\mathscr{A}=\{A_0,\cdots,A_{n-1}\}$. We define the coding map
$\tau:X\rightarrow \Sigma^{+}_n:=\{0,1,\cdots,n-1\}^{\mathbb{Z}^+}$
by $\tau(x)=(k_0,k_1,\cdots)$, where $x\in X$, $f^i(x) \in A_{k_i}$,
$i\in \mathbb{Z}^+$ and $k_i \in \{0,\cdots,n-1\}$.
\end{definition}

Note that in the above definition, $\Sigma_n^+$ is exactly the
one-sided symbolic sequence space with state space
$\{0,1,\ldots,n-1\}$. We can give various product metric on it. For
example, one can define $d_{\Sigma}:\Sigma^+_n\times
\Sigma^+_n\rightarrow \mathbb{R}^+$ as
$d_{\Sigma}(x,y)=\max\{\frac{1}{n+1}|x_n\neq y_n\}$, where $x$, $y$
are two points in $\Sigma^+_n$.

\begin{remark}
The coding map $\tau$ is not continuous.
\end{remark}

Goetz indicated in his papers \cite{A.G_DPR,A.G_DPI} that any finite
piecewise isometry can be  lifted to a continuous map by using the
graph of coding map. This process can be extended to piecewise
continuous case.

Suppose $f:X\rightarrow X$ is a PCM and let $G_X:=\{(x,\tau(x))|x\in
X\}$ be the graph of $\tau:X\rightarrow \Sigma_n^{+}$ topologized by
the product metric $d_{G_X}: G_{X}\times G_{X}\rightarrow
\mathbb{R}^{+}$, where
$$d_{G_{X}}((x_1,\tau(x_1)),(x_2,\tau(x_2)))=\max\{d_X(x_1,x_2),d_\Sigma(\tau(x_1),\tau(x_2))\}.$$
Then we have:

\begin{proposition}
The graph map $\widetilde{f}:G_X\rightarrow G_X$, $(x,\tau(x))\mapsto (f(x),\tau(f(x)))$ is continuous
and the following diagram commutes:
\[
\xymatrix{
G_X \ar[r]^{\widetilde{f}} \ar[d]_{\pi} & G_X \ar[d]^\pi\\
X \ar[r]_f & X
}
\]
where $\pi$ is the natural projection on the first component.
\end{proposition}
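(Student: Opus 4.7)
The plan is to verify commutativity by direct computation and then establish continuity by unpacking what convergence in the product metric on $G_X$ entails.

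For commutativity, I would simply observe that for any $(x,\tau(x))\in G_X$,
$$\pi\bigl(\widetilde{f}(x,\tau(x))\bigr)=\pi\bigl(f(x),\tau(f(x))\bigr)=f(x)=f\bigl(\pi(x,\tau(x))\bigr),$$
so $\pi\circ\widetilde{f}=f\circ\pi$ on $G_X$. Nothing more to say here.

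For continuity, I would take a convergent sequence $(x_k,\tau(x_k))\to(x,\tau(x))$ in $G_X$ and show both components of $\widetilde{f}(x_k,\tau(x_k))$ converge to those of $\widetilde{f}(x,\tau(x))$. The definition of $d_{G_X}$ tells me that $d_X(x_k,x)\to 0$ and $d_\Sigma(\tau(x_k),\tau(x))\to 0$; by the shape of the metric on $\Sigma_n^+$, the latter means that for every $N$ there is a $k_0$ such that $\tau(x_k)$ and $\tau(x)$ agree on positions $0,1,\dots,N$ for all $k\ge k_0$. In particular, writing $\tau(x)=(k_0,k_1,\dots)$, eventually $x_k\in A_{k_0}$, so both $x$ and the tail of the sequence lie in $\cl(A_{k_0})$; applying the continuity of $f_{k_0}=f|\cl(A_{k_0})$ then yields $f(x_k)\to f(x)$ in $X$. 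This handles the first coordinate of $\widetilde{f}$.

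For the second coordinate, I would note the shift identity $\tau(f(y))_i=\tau(y)_{i+1}$, which holds for any $y\in X$ directly from the definition of $\tau$. Hence agreement of $\tau(x_k)$ with $\tau(x)$ on positions $0,\dots,N+1$ forces agreement of $\tau(f(x_k))$ with $\tau(f(x))$ on positions $0,\dots,N$, giving $d_\Sigma(\tau(f(x_k)),\tau(f(x)))\to 0$. Combining the two coordinate-wise convergences yields $\widetilde{f}(x_k,\tau(x_k))\to\widetilde{f}(x,\tau(x))$ in $G_X$.

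The only mildly delicate step is the first coordinate: one must use that $x$ itself lies in $A_{k_0}$ (from the definition of $\tau(x)_0=k_0$) and that the approximating points $x_k$ lie in $A_{k_0}\subset\cl(A_{k_0})$ \emph{eventually} rather than necessarily from the start, so that the piecewise continuity hypothesis $f_{k_0}\in C(\cl(A_{k_0}),X)$ actually applies. Once this is noted, the rest of the argument is mechanical, and no appeal to continuity of the (discontinuous) map $\tau$ on $X$ is ever required — that is precisely what passing to the graph accomplishes.
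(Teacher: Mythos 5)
Your proof is correct: commutativity is indeed immediate, and the continuity argument (closeness in $d_{G_X}$ forces agreement of the initial coding symbols, hence membership of the tail of the sequence in the same piece $A_{k_0}$ so that the continuous branch $f_{k_0}$ applies, while the shift identity $\tau(f(y))_i=\tau(y)_{i+1}$ handles the symbolic coordinate) is exactly the mechanism behind Goetz's graph-lifting construction. The paper itself only cites \cite{A.G_DPR} for this step, so your write-up supplies the same argument in full detail rather than taking a different route.
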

\begin{proof}
The proof is much the same as that in \cite{A.G_DPR} , though $f$
here is more general.
\end{proof}

Let $\cl_{X\times \Sigma^+_n}(G_X)$ be the closure of $G_X$ in
$X\times \Sigma^+_n$, then we can naturally continuously extend
$\widetilde{f}:G_X\rightarrow G_X$ to
$\widehat{f}:\cl(G_X)\rightarrow \cl(G_X)$ by limit. That is, if
$(x,s)\in \cl(G_X)-G_X$ (then there must be a sequence $\{(x_n,\tau
(x_n))\}\subset G_X$ such that $(x_n,\tau (x_n))\rightarrow (x,s)$,
as $n\rightarrow \infty$), thus we define $\widehat{f}(x,s):=\lim
\limits_{n\rightarrow \infty} \widetilde f(x_n,\tau (x_n))$.
Unfortunately, for $\widehat{f}$ we do not have the commutative
diagram as $\widetilde{f}$ has, that is, in general $\pi \circ
\widehat{f}\neq  f \circ\pi$.

From now on, if $A\subset X$, we use $\widetilde{A}$ to denote the graph of $\tau$ restrict on $A$,
and $\widehat{A}$ to denote the closure of $\widetilde{A}$ in $\cl(G_X)$.

For any compact subset $N\subset X$, $\widetilde{N}$ may not
be a compact set in $G$. But for $\widehat{N}$ we have:

\begin{lemma}\label{lemma_comp}
For any compact subset $N$ of $X$, $\widehat{N}$ is compact in $\cl(G_X)$.
\end{lemma}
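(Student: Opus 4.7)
The plan is to exploit that $\Sigma_n^+$ is a compact metric space and then show $\widehat{N}$ sits as a closed subset of a suitable compact product.

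First, I would verify that $\Sigma_n^+ = \{0,1,\ldots,n-1\}^{\mathbb{Z}^+}$, equipped with the metric $d_\Sigma$ defined just above, is compact. The metric $d_\Sigma$ induces the product topology coming from the discrete topology on the finite set $\{0,1,\ldots,n-1\}$, so $\Sigma_n^+$ is a countable product of finite (hence compact) spaces, and therefore compact (either by Tychonoff or by a direct diagonal subsequence extraction, since the metric is separable and complete). Consequently, for the given compact $N\subset X$, the product $N\times \Sigma_n^+$ is compact in $X\times \Sigma_n^+$ with the product metric, which, when restricted, agrees with $d_{G_X}$.

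Next I would place $\widehat N$ inside this compact product. By definition $\widetilde N = \{(x,\tau(x)) : x\in N\}\subset N\times \Sigma_n^+$. Since $N$ is compact in the metric space $X$, it is closed in $X$, so $N\times \Sigma_n^+$ is closed in $X\times \Sigma_n^+$. The closure $\widehat N$ of $\widetilde N$ in $\cl(G_X)$ coincides with its closure in $X\times \Sigma_n^+$ (because $\cl(G_X)$ itself is closed in $X\times \Sigma_n^+$), and this closure must lie in $N\times \Sigma_n^+$. Thus $\widehat N$ is a closed subset of the compact set $N\times \Sigma_n^+$, and hence compact.

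There is no real obstacle here; the only point that warrants a line of justification is that the metric $d_\Sigma$ defined in the excerpt does induce the product topology (so that compactness of $\Sigma_n^+$ follows from compactness of the finite factors). Everything else is routine point-set topology: a closed subset of a compact space is compact, and a product of compact sets is compact. I would therefore keep the write-up short, essentially as above, and not bother extracting convergent subsequences by hand.
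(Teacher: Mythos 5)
Your proposal is correct and follows exactly the same route as the paper's own (one-line) proof: $\widehat N$ is a closed subset of the compact product $N\times\Sigma_n^+$, hence compact. You merely fill in the routine justifications (compactness of $\Sigma_n^+$, and that the closure of $\widetilde N$ taken in $\cl(G_X)$ agrees with the closure in $X\times\Sigma_n^+$) that the paper leaves implicit.
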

\begin{proof}
The result follows by noting that $N\times \Sigma^+_n$ is compact
and $\widehat{N}$ is closed in it.
\end{proof}

\section{Invariant sets and isolating neighborhoods}

Let $f:X\rightarrow X$ be a PCM and $A\subset X$, we define:\\
\( \Inv_f^-(A):=\{x\in A:\exists\{x_k\}_{k\in \mathbb{Z}^-}\subset A
\mbox{ s.t. }
x_0=x \mbox{ and } f(x_k)=x_{k+1},\forall k \in \mathbb{Z}^{-}-\{0\}\};\)\\
\(\Inv_f(A):=\{x\in A: \exists \{x_k\}_{k \in \mathbb{Z}} \subset A
\mbox{ s.t. } x_0=x \mbox{ and }  f(x_k)=x_{k+1}, \forall k \in \mathbb{Z} \}.\)

\begin{definition}
Let $f:X\rightarrow X$ be a PCM, a subset $S$ of $X$
is called an invariant set (with respect to $f$) if $\Inv_f(S)=S$.
\end{definition}

\begin{definition}
A compact set $N \subset X$ is an isolating neighborhood with
respect to a PCM $f:X\rightarrow X$ if $\Inv_f(N)\subset \interior
N$.
\end{definition}

\begin{definition}
An invariant set $S$ is called an isolated invariant set with
respect to a PCM $f:X \rightarrow X$ if there exits an isolating
neighborhood $N$ such that $S=\Inv_f(N)$.
\end{definition}

\begin{remark}
When $f:X\rightarrow X$ is continuous and $N\subset X$ is compact,
$\Inv N$ is also compact. But for a PCM, this is not always true.
For example, consider $f:[0,1]\rightarrow [0,1]$, defined by
$f(x)=x$ when $x\in [0,\frac{1}{2})$ and
$f(x)=\frac{1}{2}x+\frac{1}{2}$ otherwise. Thus $[0,0.6]$ is a
compact set, while $\Inv_f([0,0.6])=[0,\frac{1}{2})$ is not compact
in $[0,1]$.
\end{remark}

\begin{lemma}\label{StoG}
Suppose $S\subset X$ is an invariant set with respect to PCM
$f:X\rightarrow X$, then $\widetilde{S}$ is an invariant set in $G_X$
with respect to $\widetilde{f}:G_X\rightarrow G_X$.
\end{lemma}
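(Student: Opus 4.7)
The plan is to verify the two inclusions $\Inv_{\widetilde{f}}(\widetilde S)\subset \widetilde S$ and $\widetilde S\subset \Inv_{\widetilde{f}}(\widetilde S)$, of which the first is tautological from the definition of $\Inv$ since the invariant part of a set is always a subset of the set. So the substance is in the reverse inclusion.

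For that inclusion I would start with an arbitrary point $(x,\tau(x))\in\widetilde S$, so that $x\in S$. Since $S$ is $f$-invariant, I can invoke the definition of $\Inv_f(S)=S$ to produce a full bi-infinite trajectory $\{x_k\}_{k\in\mathbb Z}\subset S$ with $x_0=x$ and $f(x_k)=x_{k+1}$ for all $k\in\mathbb Z$. The candidate witnessing trajectory in $G_X$ is then the obvious lift $\{(x_k,\tau(x_k))\}_{k\in\mathbb Z}$, which lies in $\widetilde S$ since each $x_k\in S$.

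It remains to verify that this lifted sequence is a $\widetilde f$-orbit, namely that $\widetilde f(x_k,\tau(x_k))=(x_{k+1},\tau(x_{k+1}))$ for every $k\in\mathbb Z$. This is immediate from the definition $\widetilde f(x,\tau(x))=(f(x),\tau(f(x)))$ together with $f(x_k)=x_{k+1}$. Hence $(x,\tau(x))\in\Inv_{\widetilde f}(\widetilde S)$, completing the argument.

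I do not foresee a serious obstacle here; the proof is essentially bookkeeping that uses only the commutativity $\pi\circ\widetilde f=f\circ\pi$ (established in the earlier proposition) and the definition of $\widetilde S$. The only mild subtlety is to be careful that the entire lifted backward/forward orbit stays inside $\widetilde S$ rather than merely in $G_X$, but this is guaranteed piecewise because each $x_k\in S$ forces $(x_k,\tau(x_k))\in\widetilde S$ by the very definition of the tilde operation.
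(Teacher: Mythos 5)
Your proof is correct and is exactly the ``obvious'' verification the paper has in mind (the paper omits the proof entirely, stating only that it is obvious): lift the bi-infinite $f$-orbit in $S$ through $x$ to the graph and check it is a $\widetilde f$-orbit in $\widetilde S$ via the formula $\widetilde f(x,\tau(x))=(f(x),\tau(f(x)))$.
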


\noindent The proof is obvious.

\begin{lemma}\label{lemma_IN}
Suppose $S\subset X$ is an isolated invariant set with respect to a
PCM $f:X\rightarrow X$, then $\widehat{S}\subset \cl(G_X)$ is an
invariant set with respect to $\widehat{f}:\cl(G_X)\rightarrow
\cl(G_X)$.
\end{lemma}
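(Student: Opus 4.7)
The plan is to verify that $\Inv_{\widehat f}(\widehat{S}) = \widehat{S}$. The inclusion $\Inv_{\widehat f}(\widehat{S}) \subset \widehat{S}$ is immediate from the definition of $\Inv$, so the substance is to produce, for every $(x,s) \in \widehat{S}$, a full $\widehat{f}$-orbit in $\widehat{S}$ through $(x,s)$. A convenient preliminary is the compactness of $\widehat{S}$: if $N$ is an isolating neighborhood with $S = \Inv_f(N)$, then $\widetilde{S} \subset \widetilde{N} \subset \widehat{N}$, so taking closures in $\cl(G_X)$ gives $\widehat{S} \subset \widehat{N}$; by Lemma~\ref{lemma_comp}, $\widehat{N}$ is compact, and since $\widehat{S}$ is closed in $\cl(G_X)$, it is compact as well.

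For $(x, \tau(x)) \in \widetilde{S}$ the conclusion reduces to Lemma~\ref{StoG}: invariance of $S$ furnishes a bi-infinite $f$-orbit $\{x_k\}_{k \in \mathbb{Z}} \subset S$ with $x_0 = x$, and since $\widehat{f}|_{G_X} = \widetilde{f}$, the lifted sequence $\{(x_k, \tau(x_k))\}$ is a full $\widehat{f}$-orbit in $\widetilde{S} \subset \widehat{S}$. The substantive case is $(x,s) \in \widehat{S} \setminus \widetilde{S}$. Here I would pick an approximating sequence $(x_n, \tau(x_n)) \to (x, s)$ with $x_n \in S$ and, for each $n$, a bi-infinite $f$-orbit $\{x_n^k\}_{k \in \mathbb{Z}} \subset S$ with $x_n^0 = x_n$. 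Because all graphs $(x_n^k, \tau(x_n^k))$ lie in the compact set $\widehat{S}$, a standard diagonal extraction---selecting a subsequence converging at $k=0$, refining successively at $k=\pm 1, \pm 2, \dots$, and taking the diagonal---produces indices $n_j \to \infty$ and points $(y^k, s^k) \in \widehat{S}$ with $(x_{n_j}^k, \tau(x_{n_j}^k)) \to (y^k, s^k)$ for every $k \in \mathbb{Z}$ and $(y^0, s^0) = (x, s)$. Passing to the limit in $\widetilde{f}(x_{n_j}^k, \tau(x_{n_j}^k)) = (x_{n_j}^{k+1}, \tau(x_{n_j}^{k+1}))$ via continuity of $\widehat{f}$ yields $\widehat{f}(y^k, s^k) = (y^{k+1}, s^{k+1})$, so $\{(y^k, s^k)\}$ is the required full $\widehat{f}$-orbit.

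The main obstacle is coordinating the diagonal extraction with the passage to the limit in the orbit relation: one must simultaneously manage infinitely many time-indices and ensure the orbit identity survives the limit. Both points are handled by the compactness of $\widehat{S}$ (inherited from $\widehat{N}$ through the isolating neighborhood) and by the very definition of $\widehat{f}$ as the continuous extension of $\widetilde{f}$ to $\cl(G_X)$. Note that the isolation hypothesis on $S$ enters only through this compactness; mere invariance of $S$, which (as the remark above illustrates) need not be compact, would not be enough.
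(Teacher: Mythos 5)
Your proposal is correct and follows essentially the same route as the paper's proof: the two cases $\widetilde{S}$ versus $\widehat{S}\setminus\widetilde{S}$, Lemma~\ref{StoG} for the first, and for the second an approximating sequence of lifted orbits with a compactness-based (diagonal) subsequence extraction and passage to the limit via continuity of $\widehat{f}$. Your justification of the compactness of $\widehat{S}$ (as a closed subset of $\widehat{N}$, rather than applying Lemma~\ref{lemma_comp} to $S$ directly, which need not be compact) is in fact a slightly more careful reading of the step the paper compresses into a parenthetical.
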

\begin{proof}
We only need to show $\widehat{S}\subset
\Inv_{\widehat{f}}(\widehat{S})$, since the inverse inclusion is
obvious. For each $a\in \widehat{S}$, there are only two cases:

{\bf Case 1:} $a\in \widetilde{S}$, then by Lemma \ref{StoG}, $a\in
\Inv_{\widetilde{f}}(\widetilde{S})\subset
\Inv_{\widehat{f}}(\widehat{S})$.

{\bf Case 2:} $a=(x,u)\in \widehat{S}-\widetilde{S}$ for some $x\in
\cl(S)$. Then there must be a sequence $\{(x_n,\tau(x_n))\}_{n\in
\mathbb{N}}\subset \widetilde{S}$ s.t. $\lim_{n\rightarrow
\infty}(x_n,\tau(x_n))=(x,u)$. For each $n$, by Lemma \ref{StoG},
there exits $\{(x^{(i)}_n),\tau(x^{(i)}_n)\}_{i\in
\mathbb{Z}}\subset \widetilde{S}$, s.t. $\widetilde{f}
(x^{(i)}_n,\tau(x^{(i)}_n))=(x^{(i+1)}_n,\tau(x^{(i+1)}_n))$ and
$(x_n^{(0)},\tau(x^{(0)}_n))=(x_n,\tau(x_n))$. By the compactness of
$\widehat{S}$ ($S$ is isolated and Lemma \ref{lemma_comp}), we may
assume without loss of generality that
$(x^{(i)}_n,\tau(x^{(i)}_n))\rightarrow (x^{(i)},u^{(i)})$ for all
$i$ when $n\rightarrow \infty$. Therefore $\widehat{f}
(x^{(i)},u^{(i)})=\lim_{n\rightarrow
\infty}\widetilde{f}(x^{(i)}_n,\tau(x^{(i)}_n))=\lim_{n\rightarrow
\infty}(x^{(i+1)}_n,\tau(x^{(i+1)}_n))$ $=(x^{(i+1)},u^{(i+1)})$ and
$(x^{(0)},u^{(0)})$ $=\lim_{n\rightarrow
\infty}(x^{(0)}_n,\tau(x^{(0)}_n))=(x,u)$.
\end{proof}

Recall that for $\widehat{f}$ defined in previous section, due to
the lack of commutativity of the diagram, we should make our
definition about isolating neighborhoods slightly more strict.

\begin{definition}
A compact set $N \subset X$ is a compatible isolating neighborhood
with respect to PCM $f:X\rightarrow X$ if $N$ is an isolating neighborhood
and the intersection of $\partial N$ and discontinuous set contains no points of $\Inv^-_g(N)$, for any
$g$ adjoint with $f$.
\end{definition}

\begin{lemma}\label{lemma_CIN}
Suppose a compact subset $N\subset X$ is a compatible isolating neighborhood
of PCM $f:X\rightarrow X$, then $\widehat{N}$ is an isolating neighborhood of $\widehat{f}$.
\end{lemma}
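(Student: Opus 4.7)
Compactness of $\widehat{N}$ is immediate from Lemma \ref{lemma_comp}, so the substance of the claim is the inclusion $\Inv_{\widehat{f}}(\widehat{N}) \subset \interior_{\cl(G_X)}(\widehat{N})$. I would first record the easy observation that the set $U := \{(y,t) \in \cl(G_X) : y \in \interior N\}$ is open in $\cl(G_X)$ and sits inside $\widehat{N}$: any $(z_n,\tau(z_n)) \to (y,t) \in U$ must have $z_n \in N$ eventually, so $(y,t) \in \widehat{N}$. It is therefore enough to show that every $(x,s) \in \Inv_{\widehat{f}}(\widehat{N})$ satisfies $x \in \interior N$.

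To unpack the invariance, fix such an $(x,s)$ together with a full $\widehat{f}$-orbit $\{(x_k,s_k)\}_{k \in \mathbb{Z}} \subset \widehat{N}$ with $(x_0,s_0)=(x,s)$. Writing $\widehat{f}$ as a limit of $\widetilde{f}$ and using $\tau \circ f = \sigma \circ \tau$ on $X$, one obtains $s_{k+1}=\sigma(s_k)$, $x_{k+1}=f_{a_k}(x_k)$ with $a_k := (s_k)_0$, and $x_k \in \cl(A_{a_k}) \cap N$; indeed, an approximating sequence $(z_n^{(k)},\tau(z_n^{(k)})) \to (x_k,s_k)$ must lie eventually in $A_{a_k}$. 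I would then manufacture an adjoint map $g$ of $f$ by placing each discontinuity point of the form $x_k$ ($k \le -1$) into the piece $B_{\phi(a_k)}$, so that $g(x_k) = f_{a_k}(x_k) = x_{k+1}$; the remaining $D$-points are assigned arbitrarily, and any conflict (the same $D$-point occurring as $x_k$ and $x_l$ with $k<l\le -1$ and $a_k\ne a_l$) is defused by truncating the backward orbit at its first recurrence and extending it periodically. This yields $x \in \Inv_g^-(N)$.

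Now suppose for contradiction $x \in \partial N$. Compatibility of $N$ rules out $x \in D$, so $x$ lies in a unique piece $A_{i_0}$ and $a_0 = i_0$; moreover every adjoint of $f$ coincides with $f$ in an open neighbourhood of $x$. A symmetric analysis of the forward orbit, together with a local surgery that reroutes the generalized orbit through $x$ into an honest $f$-trajectory while remaining in $N$, would then give $x \in \Inv_f(N)$, contradicting the isolation condition $\Inv_f(N) \subset \interior N$. The hardest part I anticipate is precisely this last surgery: whenever the orbit visits $D$ at a time $k \ne 0$, the generalized $\widehat{f}$-orbit can diverge from the true $f$-orbit, and producing a genuine bi-infinite $f$-orbit through $x$ that stays in $N$ requires careful use of compactness of $N$, continuity of each $f_i$ on $\cl(A_i)$, and the compatibility hypothesis to replace adjoint steps by true $f$-steps without leaving $N$.
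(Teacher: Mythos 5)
Your opening reduction (the set $U=\{(y,t)\in\cl(G_X):y\in\interior N\}$ is open in $\cl(G_X)$ and contained in $\widehat{N}$, so it suffices to show $x\in\interior N$), the unpacking of a full $\widehat{f}$-orbit into $s_{k+1}=\sigma(s_k)$, $x_{k+1}=f_{a_k}(x_k)$, $x_k\in\cl(A_{a_k})\cap N$, and the construction of an adjoint $g$ with $x\in\Inv^-_g(N)$ all track the paper's argument, and in fact supply details the paper dismisses with ``one can easily check.'' The genuine gap is your last step. Having reduced to the case $x\in\partial N - D$, you propose a ``surgery that reroutes the generalized orbit through $x$ into an honest $f$-trajectory'' and a contradiction with $\Inv_f(N)\subset\interior N$. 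No such surgery exists in general: at times $k\neq 0$ the orbit may pass through a point $x_k\in D$ where the step taken is $f_{a_k}$ rather than $f$, and replacing that step by the true $f$-step can expel the orbit from $N$ or destroy it altogether (e.g.\ $x$ may have a $g$-preimage in $N$ but no $f$-preimage in $N$ whatsoever). If such a rerouting were always available, alternative (2) in the paper's W\`{a}zewski proposition would be superfluous; the entire reason adjoint maps appear in this theory is that a $\widehat{f}$-orbit only projects to a $g$-orbit for \emph{some} adjoint $g$, not to an $f$-orbit.

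The paper closes the argument by a different mechanism, and you should too: for $(x,s)\in\partial\widehat{N}-\widetilde{N}$ the contradiction is with the compatibility hypothesis itself --- the offending point is exhibited as an element of $\partial N\cap D\cap\Inv^-_g(N)$, which compatibility forbids outright --- while the isolation condition $\Inv_f(N)\subset\interior N$ is invoked only for orbits lying entirely in $\widetilde{N}$, where $s=\tau(x)$ and the projected orbit is a genuine $f$-orbit. You actually get the subcase $x\in D$ for free (it terminates the proof immediately), so the real work is to show that the remaining subcase $x\in\partial N-D$ with $s\neq\tau(x)$ can be funneled into one of those two situations, for instance by locating the first coordinate at which $s$ disagrees with $\tau(x)$ and analyzing the orbit at that time; note that even the paper's own write-up is terse at exactly this point, so this is where your effort should be concentrated --- not on an orbit surgery that cannot be performed.
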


\begin{proof}
By Lemma \ref{lemma_comp}, $\widehat{N}$ is compact in $\cl(G_X)$.
It can be trivially seen that $\partial \widetilde{N}$ contains no
points of $\Inv_{\widetilde{f}}(\widetilde{N})$. Suppose $(x,s)\in
\partial{(\widehat{N})}-\widetilde{N}$ and $(x,s)\in
\Inv_{\widehat{f}}(\widehat{N})$. Since
$\widehat{f}:\cl(G_X)\rightarrow \cl(G_X)$ is continuously extended
by $\widetilde{f}:G_X\rightarrow G_X$, we see that there is no point
$(x,\tau (x))\in G_X$ such that $\widehat{f}(x,\tau (x))=(x,s)$.
Thus the backward trajectory of $(x,s)$ are all contained in
$\widehat{N}-\widetilde{N}$, and one can easily check $x$ is in
$\Inv^-_g(N)$ for some $g$ adjoint with $f$.
\end{proof}

\section{The Conley index for continuous maps}

In this section, we briefly recall the definition of the Conley
index for a continuous map. There are various definitions, we choose
here Szymczak's construction  \cite{A.S_DSS}. Since continuous
dynamics can be viewed as PCM with just one piece,  concepts such as
isolating invariant sets and isolating neighborhoods defined in
previous section also valid for continuous situation, so we will not
restate these definitions.

To define the index, we require
to introduce some  categories.

Let $\mathscr{K}$ denote an arbitrary category. $\Ob(\mathscr{K})$
stands for its objects class, and $\Mor_{\mathscr{K}}(A,B)$ means
the morphism set from object $A$ to object $B$. We use $\mathscr{T}$
to denote topological space and continuous map category and
$\mathscr{T}'$ stands for its homotopy category.

Recall that the category $\ENDO(\mathscr{K})$ is defined by
\(\Ob(\ENDO(\mathscr{K})):=\{(X,f):X \in \Ob(\mathscr{K}), f \in \Mor_{\mathscr{K}}(X,X)\}\)
and
\( \Mor_{\ENDO(\mathscr{K})}((X,f),(X',f')):=\{\varphi \in \Mor_{\mathscr{K}}(X,X'):\varphi \circ f=f' \circ \varphi\}\).

Szymczak introduced the category of objects equipped with a
morphism over $\mathscr{K}$ in \cite{A.S_DSS}, we call it the Szymczak category
$\Sz(\mathscr{K})$, which is defined as:
the objects of $\Sz(\mathscr{K})$ are the same
as in $\ENDO(\mathscr{K})$
and
\[
\Mor_{\Sz(\mathscr{K})}((X,f),(X',f')):=\Big(\Mor_{\ENDO(\mathscr{K})}\big((X,f),(X',f')\big)\times
\mathbb{Z}^{+}\Big)/\sim
\]
where $(\varphi_1,n_1) \sim (\varphi_2,n_2)$ if and only if there
exists $k \in \mathbb{Z}^{+}$ such that the following diagram
commutes
\[
\xymatrix{ X \ar[rr] ^{f^{n_1+k}} \ar[d]^{f^{n_2+k}} && X
\ar[d]^{\varphi_2} \\
X \ar[rr] ^{\varphi_1} && X'}
\]

The relation ``$\sim$'' defined above is an equivalence relation. We
denote by $[\varphi,n]$ the morphism in
$\Mor_{\Sz(\mathscr{K})}((X,f),(X',f'))$. The identity morphisms are
given by $\id_{(X,f)}=[\id_X,0]$.

We define the composition of two morphisms $[\varphi,n]\in
\Mor_{Sz(\mathscr{K})}((X,f),(X',f'))$ and $[\varphi',n']\in
\Mor_{Sz(\mathscr{K})}((X',f'),(X'',f''))$ as $[\varphi',n']\circ
[\varphi,n]:=[\varphi'\circ \varphi,n'+n].$

Note that if $[g,n]\in \Mor_{\Sz(\mathscr{K})}((X,f),(X',f'))$, then
$[g,n]=[g\circ f^k,k+n]=[f'^{k}\circ g,k+n]$ for each $k \in
\mathbb{Z}^{+}$. In particular, if $m,n \in \mathbb{Z}^{+}$ and $h
\in Mor_{\mathscr{K}}(X,X)$, then $[h^m,m]=[h^n,n]=[\id_X,0]$.

Now suppose $h:X\rightarrow X$ is a continuous map and $S$ is an
isolating invariant set, then there exists an index pair. That is a
compact pair $P=(P_1,P_0)$ of $X$ which satisfies (1)
$S=\Inv_h(\cl(P_1-P_0))\subset \interior(P_1-P_0)$; (2) $x\in P_0$
implies $h(x)\not \in P_1-P_0$; (3) $x\in P_1$, $h(x)\not \in P_1$
implies $x \in P_0$.  For the index pair $P$, we can induce a map on
the quotient space $h_P: P_1/P_0\rightarrow P_1/P_0$ defined by
\[
h_p(\mbox{[}x\mbox{]}):=\left\{
\begin{array}{ll}
\mbox{[}h(x)\mbox{]}, & x\in h^{-1}(P_1)\\
\mbox{[}P_0\mbox{]}, & \mbox{otherwise}
\end{array}
\right.
\]
and call it the index map. One can verify that the index map is
continuous.

Szymczak's definition of the Conley index for an isolating
invariant set $S$ is the class of all objects isomorphic to $(P_1/P_0,h_P)$
in $\Sz(\mathscr{T}')$, and it is independent upon the choice of $P$.

\section{The Conley index for PCMs}\label{sec_index4pcms}

Suppose $f:X\rightarrow X$ is a PCM and $N$ is a compatible
isolating neighborhood with respect to $f$, then by Lemma
\ref{lemma_CIN}, $\widehat{N}$ is also an isolating neighborhood of
$\widehat{f}$. So $\Inv_{\widehat{f}}(\widehat{N})$ meets an index
pair $P=(P_1,P_0)$. Thus by definition, the index of
$\Inv_{\widehat{f}}(\widehat{N})$ is just the isomorphic class
$[P_1/P_0,(\widehat{f})_P]$ in $\Sz(\mathscr{T}')$. Since the Conley
index for a continuous map is independent upon the choice of index
pairs, we can give the following definition:
\begin{definition}
Let $f:X\rightarrow X$ be a PCM with minimal finite partition
$\mathscr{A} =\{A_0,A_1,\ldots,A_{n-1}\}$. If a compact set
$N\subset X$ is a compatible isolating neighborhood, then we define
the Conley index on $N$, denoted by $C(N)$ {\rm(}or $C(N,f)${\rm)},
to be the Conley index of $\Inv_{\widehat{f}}(\widehat{N})$.
\end{definition}

\begin{remark}
In continuous case, the Conley index is defined both on isolating
invariant sets and isolating neighborhoods. But our definition of
piecewise continuous case only define on compatible isolating
neighborhoods. That is, for different compatible isolating
neighborhoods with the same invariant set, their index may
different.
\end{remark}

\begin{proposition}[W\`{a}zewski property]
If $C(N)$ is nontrival, then at least one of the following is true:
\begin{enumerate}
\item $\Inv_f(N)\ne \emptyset$;
\item There exists some $g$ adjoint with $f$ ($g\ne f$),
such that $\Inv_g(N)\ne \emptyset.$
\end{enumerate}
\end{proposition}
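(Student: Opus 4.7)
The plan is to reduce the statement to the ordinary W\`azewski property for the continuous lift $\widehat f$ and then pull a $\widehat f$-orbit back to $X$. By Lemma~\ref{lemma_CIN}, $\widehat N$ is a genuine isolating neighborhood for the continuous map $\widehat f:\cl(G_X)\to \cl(G_X)$, and by definition $C(N)$ is the Szymczak Conley index of $\Inv_{\widehat f}(\widehat N)$. Taking $P_1=P_0=\emptyset$ as an index pair when the invariant set is empty exhibits the trivial object $(\ast,\id_\ast)$, so nontriviality of $C(N)$ forces $\Inv_{\widehat f}(\widehat N)\ne\emptyset$. Pick $(y_0,s_0)$ in this set, extend it to a full $\widehat f$-orbit $\{(y_k,s_k)\}_{k\in\mathbb{Z}}\subset\widehat N$, and let $\{y_k\}_{k\in\mathbb{Z}}\subset N$ denote the projection; this sits in $N$ because $\widehat N\subset N\times\Sigma^+_n$.

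The heart of the argument is to read off from each symbolic coordinate $s_k$ exactly which continuous branch of $f$ has been applied to $y_k$. Let $i_k$ be the first symbol of $s_k$ and approximate $(y_k,s_k)$ by points $(z_m,\tau(z_m))\in \widetilde N$. For $m$ large the metric $d_\Sigma$ forces the first symbol of $\tau(z_m)$ to equal $i_k$, so $z_m\in A_{i_k}$, hence $y_k\in\cl(A_{i_k})$, and continuity of $f_{i_k}$ on $\cl(A_{i_k})$ together with that of $\widehat f$ yields
\[
y_{k+1}=\pi\bigl(\widehat f(y_k,s_k)\bigr)=\lim_m f(z_m)=f_{i_k}(y_k).
\]
I then manufacture an adjoint map $g$ by taking the adjoint partition $\mathscr B$ with permutation $\phi=\id$ (so $B_i-D=A_i-D$), declaring $y_k\in B_{i_k}$ whenever $y_k\in D$, and matching $\mathscr A$ on the rest of $D$. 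With $g|B_i:=f_i|B_i$, one has $g(y_k)=f_{i_k}(y_k)=y_{k+1}$ for every $k$, so $\{y_k\}$ witnesses $\Inv_g(N)\ne\emptyset$. If the construction never actually differs from the $\mathscr A$-assignment on any $y_k$ (equivalently, if $y_k\in A_{i_k}$ for every $k$, which is automatic whenever the orbit stays in $\widetilde N$), then $g=f$ on the orbit and alternative~(1) holds; otherwise $g\ne f$ and alternative~(2) holds.

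The main obstacle is precisely the failure of commutativity $\pi\circ\widehat f\ne f\circ\pi$, which is also the reason the conclusion must admit adjoint maps rather than $f$ alone; the argument sidesteps it by using the symbolic coordinate as a bookkeeping device that records the branch actually taken by $\widehat f$. A minor technical concern is a potential clash $y_k=y_\ell$ with $i_k\ne i_\ell$, which would demand incompatible reassignments on a single point of $D$; however any such coincidence forces the orbit to be eventually periodic in $X$, so one extracts a periodic sub-orbit and fixes a single consistent choice of $i$ at each repeated point. With that in hand, verifying that $\mathscr B$ is a legitimate adjoint partition and that $g$ is a bona fide PCM is immediate from the definitions.
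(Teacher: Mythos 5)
Your overall route is the same as the paper's: pass to the continuous lift $\widehat f$ on $\widehat N$ via Lemma~\ref{lemma_CIN}, invoke the continuous W\`{a}zewski property to get $\Inv_{\widehat f}(\widehat N)\ne\emptyset$, project a full $\widehat f$-orbit to $N$, and use the leading symbol of the $\Sigma^+_n$-coordinate to decide which branch $f_{i_k}$ was applied, so that the projected sequence becomes an orbit of some adjoint map. Your identification $y_{k+1}=f_{i_k}(y_k)$ is in fact carried out more carefully than in the paper, whose proof compresses the analogous step into ``one can easily check $x\in\Inv^-_g(N)$'' and ``both of these two cases imply (2) is true.''

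The step where you resolve the clash $y_k=y_\ell$ with $i_k\ne i_\ell$ is, however, wrong as stated. Such a coincidence does not force the orbit to be eventually periodic in $X$: the two visits to the common point $y:=y_k=y_\ell\in D$ apply different branches, so $y_{k+1}=f_{i_k}(y)$ and $y_{\ell+1}=f_{i_\ell}(y)$ are in general distinct and the projected sequence need not repeat; only a coincidence of the full pairs $(y_k,s_k)=(y_\ell,s_\ell)$ would give periodicity of the $\widehat f$-orbit. Moreover, ``fixing a single consistent choice of $i$ at each repeated point'' redefines $g(y)$ and therefore redirects the trajectory at the second visit, so the original sequence $\{y_m\}$ is no longer a $g$-orbit and cannot witness $\Inv_g(N)\ne\emptyset$. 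The correct repair is to abandon the given orbit and close the loop: taking $k<\ell$ to be a clash, the finite cycle $y_k,y_{k+1},\dots,y_{\ell-1},y_k,\dots$ is still an orbit of the multivalued closure map $y\mapsto\{f_i(y):y\in\cl(A_i)\}$ lying in $N$; if a clash persists inside the cycle, shorten again, and since the period strictly decreases this terminates in a cycle each of whose points uses a single branch, yielding a genuine periodic $g$-orbit for a consistently chosen adjoint $g$ (one also needs $\cl(B_j)\subset\cl(A_j)$ so that $g$ is still a PCM, which holds because every reassigned point lies in $\cl(A_{i_m})$). To be fair, the paper's own proof is silent on this consistency issue altogether, so you have located a real subtlety in the published argument; but your patch, as written, does not close it.
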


\begin{proof}
$C(N)$ is nontrivial implies the Conley index (continuous case) of
$\widehat{N}$ is nontrivial. Thus, by \cite{A.S_DSS},
$\Inv_{\widehat{f}}(\widehat{N})\neq \emptyset$. According to Lemma
\ref{lemma_IN} we have $\widehat{\Inv_{f}(N)}\subset
\Inv_{\widehat{f}}(\widehat{N})\neq \emptyset$. Suppose
$\widehat{\Inv_f(N)}\neq \emptyset$, then we get $\Inv_f(N)\neq
\emptyset$ and (1) is true. Otherwise, we assume
$\widehat{\Inv_f(N)}=\emptyset$, then their must exists a point
$(x,s)\in \widehat{N}-\widetilde{N}$, such that $(x,s)\in
\Inv_{\widehat{f}}(\widehat{N})$. This means the forward and
backward trajectory of $(x,s)$ are both contained in $\widehat{N}$.
In the proof of lemma \ref{lemma_CIN}, we have already seen that
there is no point $(x,\tau(x))\in \widetilde{N}$ such that
$\widehat{f}(x,\tau(x))=(x,s)$. So the backward trajectory of
$(x,s)$ is contained in $\widehat{N}-\widetilde{N}$. But for $i\in
\mathbb{N}$, if $f^{i-1}(x,s)\in \widehat{N}-\widetilde{N}$, we have
either $f^i(x,s)\in \widehat{N}-\widetilde{N}$ or $f^i(x,s)\in
\widetilde{N}$. If for $i_0\in N$, $f^{i_0} (x,s)\in \widetilde{N}$,
then $f^{i}(x,s)$ can never be back to $\widehat{N}-\widetilde{N}$
for $i>i_0$. Thus either forward trajectory of $(x,s)$ is contained
in $\widehat{N}-\widetilde{N}$ or there exists $i_0\in \mathbb{N}$
such that $f^{i_0-1}(x,s)\in \widehat{N}-\widetilde{N}$ and
$f^{i_0}(x,s)\in \widetilde{N}$.  However both of these two cases
imply (2) is true.
\end{proof}

\begin{remark}
Other definitions of the Conley index for discrete semidynamical
systems can be chosen to define PCM's index. If one choose the
definition in  \cite{D.M_CCom}, we even don't need to take closure
of the graph of the coding map, but the index such defined can only
detect positive invariant sets.
\end{remark}

\begin{example}
Consider a piecewise continuous interval map $f:[-1,2]\rightarrow [-1,2]$
befined by
\[
f(x)=\left\{
\begin{array}{ll}
x+1/3, & x \in [-1,-1/3)\\
x+1, & x \in [-1/3,0)\\
x+2/3, & x \in [0,1/3)\\
x-1/3, & x \in [1/3,2/3)\\
-x+4/3, & x\in [2/3,1)\\
x-1, & x\in [1,4/3)\\
x-1/3, & x\in [4/3,2]
\end{array}.\right.
\]
It can be easily verify that $N=[-1/3,4/3]$ is an compatible
isolating neighborhood, and $P=(\widehat{N},\emptyset)$ is an index
pair in $\cl({G_X})$. So the index of $N$ is the isomorphic class
$[\mathcal{P}_6,g]$ in $\Sz(\mathscr{T}')$, in which $\mathcal{P}_6$
represents the pointed six points space, and $g$ is induced by
$\widehat{f}_P$ together with the homotopy equivalence from
$\widehat{N}/\emptyset$ to $\mathcal{P}_6$. It can be easily
verified that the index is nontrivial. Meanwhile, according to the
definition of $f$, we can see that $\Inv[-1/3,4/3]$ is nonempty.
Thus our definition of the Conley index for PCMs does keep some
existence information.
\end{example}

\section{Conclusions}

In this paper we lift a PCM to a continuous map, and then use the
definition of the Conley index for a continuous map to define the
index of a PCM, so we generalize the definition of the Conley index
to some discontinuous systems. Just like the similar situation
discussed in \cite{D.M_CCom}, where the Conley index for maps in
absence of compactness was introduced, here the index is only
defined on compatible isolating neighborhoods and the W\`{a}zewski
property is weaker than the usual index. However, it does keep some
important information, as being shown by an example given in
Section~\ref{sec_index4pcms}; moreover, for a continuous map the
minimal finite partition contains only one piece, this implies that
the lifting map is isomorphic to the original one in
$\ENDO(\mathscr{T})$, hence the index we defined exactly generalize
the continuous one. Unfortunately, due to the absence of continuity,
which leads to the lack of suitable homotopy of PCMs, the
continuation property of the Conley index is lost in piecewise
continuous situation.

\vspace{0.5cm}\noindent \textbf{Acknowledgment} This research was
supported jointly by NSFC Grant 10672146 and Shanghai Leading
Academic Discipline Project (S30104). The authors would like to
thank Congping Lin for discussions.

\end{document}